\begin{document}
 \newcounter{thlistctr}
 \newenvironment{thlist}{\
 \begin{list}%
 {\alph{thlistctr}}%
 {\setlength{\labelwidth}{2ex}%
 \setlength{\labelsep}{1ex}%
 \setlength{\leftmargin}{6ex}%
 \renewcommand{\makelabel}[1]{\makebox[\labelwidth][r]{\rm (##1)}}%
 \usecounter{thlistctr}}}%
 {\end{list}}

\thispagestyle{empty}

\newtheorem{Lemma}{\bf LEMMA}[section]
\newtheorem{Theorem}[Lemma]{\bf THEOREM}
\newtheorem{Claim}[Lemma]{\bf CLAIM}
\newtheorem{Corollary}[Lemma]{\bf COROLLARY}
\newtheorem{Proposition}[Lemma]{\bf PROPOSITION}
\newtheorem{Example}[Lemma]{\bf EXAMPLE}
\newtheorem{Fact}[Lemma]{\bf FACT}
\newtheorem{definition}[Lemma]{\bf DEFINITION}
\newtheorem{Notation}[Lemma]{\bf NOTATION}
\newtheorem{remark}[Lemma]{\bf REMARK}

\newcommand{\restrict}{\mbox{$\mid\hspace{-1.1mm}\grave{}$}}
\newcommand{\covers}{\mbox{$>\hspace{-2.0mm}-{}$}}
\newcommand{\covered}{\mbox{$-\hspace{-2.0mm}<{}$}}
\newcommand{\notcover}{\mbox{$>\hspace{-2.0mm}\not -{}$}}

\newcommand{\boldalpha}{\mbox{\boldmath $\alpha$}}
\newcommand{\boldbeta}{\mbox{\boldmath $\beta$}}
\newcommand{\boldgamma}{\mbox{\boldmath $\gamma$}}
\newcommand{\boldxi}{\mbox{\boldmath $\xi$}}
\newcommand{\boldlambda}{\mbox{\boldmath $\lambda$}}
\newcommand{\boldmu}{\mbox{\boldmath $\mu$}}

\newcommand{\barzero}{\bar{0}}

\newcommand{\sfq}{{\sf q}}
\newcommand{\sfe}{{\sf e}}
\newcommand{\sfk}{{\sf k}}
\newcommand{\sfr}{{\sf r}}
\newcommand{\sfc}{{\sf c}}
\newcommand{\restr}{\negmedspace\upharpoonright\negmedspace}

\title[De Morgan Semi-Heyting Algebras]{De Morgan Semi-Heyting and Heyting Algebras}             

\author{Hanamantagouda P. Sankappanavar}

\keywords{De Morgan semi-Heyting algebras of level $n$, subvarieties 
 De Morgan Heyting algebra, dually Stone Heyting algebra, 
discriminator variety, simple algebra,  
subdirectly irreducible algebra, equational base.} 

\subjclass[2010]{$Primary:03G25, 06D20, 08B15, 06D15, 03C05, 03B50;$  $Secondary:08B26, 06D30, 06E75$}

\begin{abstract}

The variety $\mathbf{DMSH}$ of semi-Heyting algebras with a De Morgan negation
was introduced in ~\cite{Sa11} and an increasing sequence $\mathbf{DMSH_n}$ of level $n$, for $n \in \omega$, of its subvarieties was investigated in the series ~\cite{Sa11}, 
~\cite{Sa14}, ~\cite{Sa14a}, ~\cite{Sa15}, \cite{Sa17}, and \cite{Sa18}, of which the present paper is a sequel.
In this paper, we prove two main results:  
Firstly, we prove that $\mathbf{DMSH_1}$-algebras of level $1$ satisfy Stone identity, generalizing an earlier result that regular $\mathbf{DMSH_1}$-algebras of level $1$ satisfy Stone identity.
Secondly, we prove that the variety of $\mathbf{DmsStSH}$ of dually ms, Stone semi-Heyting algebras is at level $2$.   As an application, it is derived that the variety of De Morgan semi-Heyting algebras is also at level $2$.  It is also shown that these results are sharp.  
\end{abstract}

\maketitle

\thispagestyle{empty}

\section{{\bf Introduction}} \label{SA}

The variety $\mathbf{DQDSH}$ of semi-Heyting algebras 
with a dually quasi-De Morgan negation
was introduced and investigated in ~\cite{Sa11}. 
Several important subvarieties of $\mathbf{DQDSH}$
were also introduced in the same paper, some of which are the following: Subvarieties of $\mathbf {DQD}$ of level $n$, for $n \in \mathbf{\omega}$,  
the variety $\mathbf {DMSH}$ of   
De Morgan (symmetric) semi-Heyting algebra, and DmsStSH of dually ms Stone semi-Heyting algebras.  
The work of  \cite{Sa11} was continued in \cite{Sa14}, \cite{Sa14a}, \cite{Sa15},  \cite{Sa17}
and \cite{Sa18}.   
We also note that the variety $\mathbf{DMSH}$ is an equivalent algebraic semantics for the propositional logic, called ``De Morgan semi-Heyting logic'' which is recently introduced in \cite{CoSa18}.  Since the lattice of subvarieties of $\mathbf{DMSH}$ is dually isomorphic to the lattice of extensions of the De Morgan semi-Heyting logic, the results of this paper and the earlier papers in this series have logical counterparts in the corresponding (logical) extensions of the De Morgan semi-Heyting logic; and, in particular, to De Morgan Heyting logic.

In this paper, we present two main results: Firstly, we prove that the variety of De Morgan semi-Heyting algebras of leve $1$ ($\mathbf{DMSH_1}$ for short) satisfies Stone identity, generalizing an earlier result that regular $\mathbf{DMSH_1}$-algebras of level $1$ satisfy Stone identity, proved in \cite{Sa15}.
Secondly, we prove that the variety of $\mathbf{DmsStSH}$ of dually ms, Stone semi-Heyting algebras is at level 2.   As an application, it is derived that the variety of De Morgan Stone semi-Heyting algebras is at level 2.  Finally, it is shown that these results are sharp.

\vspace{1cm}
\section{\bf {Preliminaries}} \label{SB}

In this section we recall some definitions and results needed in this paper.   
For other relevant information, we refer the reader to the textbooks \cite{BaDw74}, \cite{BuSa81} and \cite{Ra74}.

An algebra ${\mathbf L}= \langle L, \vee ,\wedge ,\to,0,1 \rangle$
is a {\it semi-Heyting algebra} (\cite{Sa07}) if \\
 $\langle L,\vee ,\wedge ,0,1 \rangle$ is a bounded lattice and ${\mathbf L}$ satisfies:
\begin{enumerate}
\item[{\rm(SH1)}] $x \wedge (x \to y) \approx x \wedge y$,
\item[{\rm(SH2)}] $x \wedge(y  \to z) \approx x \wedge [(x \wedge y) \to (x \wedge z)]$,
\item[{\rm(SH3)}] $x \to x \approx 1$.
\end{enumerate}
Semi-Heyting algebras are distributive and pseudocomplemented,
with $a^* := a \to 0$ as the pseudocomplement of an element $a$.  These and other properties (see \cite{Sa07}) of semi-Heyting algebras are frequently used without explicit mention throughout this paper.
 
Let ${\mathbf L}$ be a semi-Heyting algebra.     
${\mathbf L}$ is a {\it Heyting algebra} if ${\mathbf L}$ satisfies:
\begin{enumerate}
\item[{\rm(H)}] $(x \wedge y) \to y \approx 1$.
\end{enumerate}
$\mathbf{L}$ is a {\it Stone semi-Heyting algebra} 
if ${\mathbf L}$ satisfies:
\begin{enumerate}
\item[{\rm(St)}] $x^* \lor x^{**} \approx 1$.
\end{enumerate}
The variety of Stone semi-Heyting algebras is denoted by $\mathbf{StSH}$ or just by $\mathbf{St}$.

The following definition, taken from \cite{Sa11}, is central to this paper.
\begin{definition}
An algebra ${\mathbf L}= \langle L, \vee ,\wedge ,\to, ', 0,1
\rangle $ is a {\it semi-Heyting algebra with a dual quasi-De Morgan
operation} or {\it dually quasi-De Morgan semi-Heyting algebra}
\rm{(}$\mathbf {DQD}$-algebra, for short\rm{)}  if\\
 $\langle L, \vee ,\wedge ,\to, 0,1 \rangle $ is a semi-Heyting algebra, and
${\mathbf L}$ satisfies:
 \begin{itemize}
    \item[(a)]  $0' \approx 1$ and $1' \approx 0$,
     \item[(b)] 
      $(x \land y)' \approx x' \lor y'$,
    \item[(c)]
    $(x \lor y)''  \approx x'' \lor y''$,     
    \item[(d)]  
   $x'' \leq x$.
\end{itemize}
\noindent 
Let $\mathbf{L}$ be a $\mathbf {DQD}$-algebra.  
 $\mathbf {L}$ is a {\it De Morgan semi-Heyting algebra}  
  {\rm(}$\mathbf{DM}$-algebra{\rm)} if ${\bf
L}$ satisfies:
\begin{itemize}
\item[(DM)]  $x'' \approx x$.
\end{itemize}
${\bf L}$ is a {\it dually ms semi-Heyting algebra} {\rm(}$\mathbf{Dms}$-algebra{\rm)} if ${\bf
L}$ satisfies: 
\begin{itemize}
\item [(JDM)] $(x \lor y)' \approx x' \land y'$  ($\lor$-De Morgan law).
\end{itemize} 
 $\mathbf{L}$ is a {\it blended dually quasi-De Morgan semi-Heyting algebra}  {\rm(}$\mathbf{BDQD}$-algebra{\rm)} if  $\mathbf{L}$ satisfies the
following identity: 
 \begin{itemize}
\item[(BDM)]  $(x \lor x^*)' \approx x' \land x{^*}'$  (Blended $\lor$-De Morgan law).
 \end{itemize}
 $\mathbf{L}$ is regular if  $\mathbf{L}$ satisfies the
following identity: 
 \begin{itemize}
 \item[(R)]  $ x \land x^+ \leq y \lor y^*$, where $x^+ := x'{^*}'$. 
 \end{itemize}
\end{definition}  
\emph {The reader should be cautioned that this notion of regularity is totally different from the one used in \cite{Sa11}.}

The varieties of $\mathbf {DQD}$-algebras,  
$\mathbf{Dms}$-algebras,  
and $\mathbf{DM}$-algebras 
 are denoted, respectively, by $\mathbf {DQD}$,  $\mathbf{Dms}$,
and $\mathbf{DM}$.
 The variety of regular DQD-algebras will be denoted by $\mathbf{RDQD}$.
$\mathbf {DQDSt}$ denotes the subvariety of $\mathbf{DQD}$ with Stone semi-Heyting reducts.        
If the underlying semi-Heyting algebra of a $\mathbf {DQD}$-algebra is a Heyting
algebra, then we add    
``$\mathbf{H}$'' at the end of the names of the varieties that will be considered in the sequel; for example, $\mathbf {DQDH}$ denotes the variety of dually quasi-De Morgan Heyting algebras.

\begin{Lemma} \label{2.2}
Let ${\mathbf L} \in \mathbf{DQD}$ and let $x,y, z \in L$.  Then
\begin{enumerate}
 \item[{\rm(i)}]  $1'^{*}=1$, and $1 \to x =x$,
\item[{\rm(ii)}] $x \leq y$ implies $x' \geq y'$,  
\item[{\rm(iii)}] $(x \land y)'^{*}=x'^{*} \land y'^{*}$,
\item[{\rm(iv)}] $ x''' = x'$,
\item[{\rm(v)}] $x \lor x^+ = 1$. 
\end{enumerate}
\end{Lemma}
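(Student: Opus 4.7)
The plan is to derive (i)--(v) in sequence, each using only the axioms (SH1)--(SH3) of semi-Heyting algebras, the $\mathbf{DQD}$ axioms (a)--(d), and the standard fact that a semi-Heyting algebra is a distributive pseudocomplemented lattice with $a^* = a \to 0$, so that $a \land a^* = 0$ and $(a \lor b)^* = a^* \land b^*$.

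For (i), I would substitute $x := 1$ into (a) to get $1' = 0$, then apply $(\cdot)^*$ to obtain $1'^* = 0^* = 0 \to 0 = 1$ by (SH3). For $1 \to x = x$, I would set $x := 1$ in (SH1) to get $1 \wedge (1 \to y) = 1 \wedge y$, which collapses to $1 \to y = y$. For (ii), assuming $x \leq y$ I have $x \wedge y = x$, so applying $'$ and (b) yields $x' = (x \land y)' = x' \lor y'$, forcing $y' \leq x'$. For (iii), from (b) we get $(x \land y)'^* = (x' \lor y')^*$, and then the distributive-pseudocomplemented identity $(a \lor b)^* = a^* \land b^*$ finishes the step. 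For (iv), axiom (d) gives $x'' \leq x$; applying the already-established monotonicity (ii) yields $x' \leq x'''$, while instantiating (d) at $x'$ gives $x''' = (x')'' \leq x'$, so equality holds.

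The only item requiring a small trick is (v). The plan is to start from the pseudocomplement identity $x' \wedge x'^* = 0$, apply $'$, and use (a) together with (b) to obtain
\[
1 = 0' = (x' \wedge x'^*)' = x'' \lor x'^{*\prime} = x'' \lor x^+.
\]
Then (d) gives $x'' \leq x$, so $1 = x'' \lor x^+ \leq x \lor x^+ \leq 1$, proving $x \lor x^+ = 1$.

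No step should offer serious resistance, but (v) is the one to get right: the key observation is that although $\mathbf{DQD}$ does \emph{not} include the $\lor$-De Morgan law (JDM), we only need the $\land$-version (b) once, applied to the two elements $x'$ and $x'^*$ whose meet is already $0$ by pseudocomplementation. Parts (i)--(iv) are purely mechanical and can be stated in a line each; (v) deserves the displayed line above.
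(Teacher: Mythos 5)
Your proof is correct in every part, and since the paper states Lemma \ref{2.2} without proof (it is background material drawn from the earlier papers in the series), there is no internal argument to compare it against; your derivations are exactly the standard ones. In particular, you handle the only delicate point, part (v), correctly: applying the $\land$-De Morgan axiom (b) to the pseudocomplementation identity $x' \wedge x'^{*} = 0$ to get $x'' \lor x^{+} = 1$, and then weakening $x''$ to $x$ via axiom (d), is precisely the intended argument and avoids any illicit appeal to the $\lor$-De Morgan law (JDM), which indeed fails in general $\mathbf{DQD}$-algebras.
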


The following definition is from \cite{Sa11}; it helps us to classify subvarieties of $\mathbf{DQD}$ by means of ``levels''.  It also plays a crucial role in describing an increasing sequence of discriminator subvarieties of $\mathbf{DQD}$ (see \cite{Sa11} for more details).

\begin{definition}\label{5.5}

Let $\mathbf{L} \in \mathbf{DQD}$ and $x \in {\bf L}$.  For $n \in \omega$, we
define $t_n(x)$ recursively as follows:\par

\begin{center}
$x{^{0(}{'{^{*)}}}} := x$;  \\
$x^{(n+1)(}{'{^{*)}}} := (x{^{n(}}{'{^{*)}}})'{^*}$, for $n \geq 0$;\\
$t_0(x) := x$, \\
     $t_{n+1}(x) := t_n(x) \land x^{(n+1){\rm(}}{'^{*{\rm{)}}}}$, for $n \geq 0$.
\end{center}

Let $n \in \omega$. 
The subvariety $\mathbf {DQD_n}$ {\it of level $n$}
of $\mathbf {DQD}$ is defined by the identity:
\begin{equation}
t_n(x)\approx t_{(n+1)}(x); \tag{L$_n$}
\end{equation}
For a subvariety $\mathbf {V}$ of $\mathbf {DQD}$, we let $\mathbf {V_n}:= \mathbf {V} \cap \mathbf {DQD_n}$.  
We say that $\mathbf {V_n}$ is ``at level $n$'' or ``of level $n$''.
\end{definition}

It should be noted that, for $n \in \omega$, 
the variety $\mathbf {BDQD_n}$, is a discriminator variety (\cite{Sa11}).

\vspace{1cm} 
 
\section{The variety $\mathbf{DM_1}$ of De Morgan semi-Heyting algebras of level $1$}

It was proved in \cite{Sa15} that regular De Morgan semi-Heyting algebras of level 1 satisfy the Stone identity.  The purpose of this section is to prove a more general theorem which says that De Morgan semi-Heyting algebras of level 1 satisfy the Stone identity. 

The following Lemma, whose proof is immediate since $x{^{2(}}{'^{*)}} \leq x$ in a 
$\mathbf{DMSH}$-algebra, gives an alternate definition of ``level $n$'', for $n \in \mathbb{N}$.

\begin{Lemma}
Let $\mathbf{L} \in \mathbf{DM}$ and let $n \in \omega$.  Then $\mathbf{L}$ is at level $n+1$ iff $\mathbf{L}$ satisfies the identity:
\begin{itemize}
\item[(L'$_n$)] \quad $(x \land x'^*){^{n}{^(}}{'^{*)}} \approx (x \land x'^*){^{(n+1)}{^(}}{'^{*)}}$.
\end{itemize}
\end{Lemma}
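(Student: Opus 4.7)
The plan is to reduce both the level-$(n+1)$ identity $(L_{n+1})$ and the candidate identity $(L'_n)$ to the same elementary equation. Let $f$ denote the unary term $f(a) := (a')^{*}$, and set $y_k := f^{k}(x)$, so that $y_0 = x$, $y_{k+1} = (y_k')^{*}$, and the meet appearing in Definition~\ref{5.5} becomes $t_m(x) = y_0 \wedge y_1 \wedge \cdots \wedge y_m$. Only two ingredients are required: Lemma~\ref{2.2}(iii), which implies $f(u \wedge v) = f(u) \wedge f(v)$; and the stated inequality $y_2 \leq y_0$, which is where the De~Morgan hypothesis $x'' \approx x$ is genuinely used. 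Since $f$ is order-preserving (by Lemma~\ref{2.2}(ii) and antitonicity of $^{*}$), iterating $y_2 \leq y_0$ yields $y_{k+2} \leq y_k$ for every $k \in \omega$.

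A straightforward induction then collapses the meet:
\[
t_m(x) \;=\; y_{m-1} \wedge y_m \qquad (m \geq 1),
\]
because each $y_k$ with $k \leq m-2$ is absorbed by the later factor $y_{k+2}$ already appearing in the meet. In particular $t_{n+1}(x) = y_n \wedge y_{n+1}$ and $t_{n+2}(x) = y_{n+1} \wedge y_{n+2}$, so $(L_{n+1})$ is equivalent to the equation $y_n \wedge y_{n+1} \approx y_{n+1} \wedge y_{n+2}$.

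On the other side, set $z_k := f^{k}\!\bigl(x \wedge (x')^{*}\bigr)$ and apply the meet-homomorphism property of $f$ by induction on $k$: this gives $z_k = y_k \wedge y_{k+1}$ for every $k \in \omega$. Hence $(L'_n)$, which reads $z_n \approx z_{n+1}$, is literally the same equation $y_n \wedge y_{n+1} \approx y_{n+1} \wedge y_{n+2}$, and the equivalence is complete. There is no real obstacle; the only care needed is to verify the base case $m = 1$ of the collapse correctly, which is what ensures that the case $n = 0$ of the statement falls under the same argument as $n \geq 1$.
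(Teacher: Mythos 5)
Your argument is correct and is exactly the route the paper intends: its proof is the one-line remark that the claim is ``immediate since $x^{2('{^*})} \leq x$ in a $\mathbf{DMSH}$-algebra,'' and your write-up simply fills in the routine details, using that inequality (iterated via monotonicity of $x \mapsto x'^*$) to collapse $t_m(x)$ to its last two factors and Lemma \ref{2.2}(iii) to identify $(x \wedge x'^*)^{k('{^*})}$ with $x^{k('{^*})} \wedge x^{(k+1)('{^*})}$. Nothing further is needed.
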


Recall that $\mathbf{DM_1}$ denotes $\mathbf{DMSH} \cap \mathbf{DQD_1}$.

{\bf In the rest of this section $\mathbf{L} \in \mathbf{DM_1}$ and $x \in L$.}

\begin{Lemma} \label{110}
Let $\mathbf{L} \in \mathbf{DM_1}$ and $x \in L$.  Then,
$x{^*}' \land x' \land x^* =0.$
\end{Lemma}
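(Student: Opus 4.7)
The plan is to read the level 1 identity $(L_1)$ for $\mathbf{DM_1}$ explicitly: it asserts $t_1(x) = t_2(x)$, i.e., $x \land x'^{*} = x \land x'^{*} \land (x'^{*})'^{*}$, which is the same as the inequality $x \land x'^{*} \leq (x'^{*})'^{*}$. The key observation is that $x^{*\prime *}$ is, by definition, the pseudocomplement of $x^{*\prime}$, so $x^{*\prime} \land x^{*\prime *} = 0$ holds automatically. It therefore suffices to show $x' \land x^{*} \leq x^{*\prime *}$ and then meet both sides with $x^{*\prime}$.

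To produce that inequality, I would substitute $x'$ for $x$ in $(L_1)$, obtaining
\[
x' \land (x')'^{*} \leq ((x')'^{*})'^{*}.
\]
Now, because $\mathbf{L} \in \mathbf{DM}$, we have $x'' = x$, so $(x')'^{*} = x^{*}$ and $((x')'^{*})'^{*} = (x^{*})'^{*} = x^{*\prime *}$. The inequality thus simplifies to the desired $x' \land x^{*} \leq x^{*\prime *}$.

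Meeting both sides with $x^{*\prime}$ then yields
\[
x^{*\prime} \land x' \land x^{*} \;\leq\; x^{*\prime} \land x^{*\prime *} \;=\; 0,
\]
and we are done. I do not foresee any real obstacle: the argument is essentially a one-line substitution in $(L_1)$, with the De Morgan identity $x'' = x$ doing the bookkeeping and the pseudocomplement law $a^{*} \land a = 0$ (applied to $a = x^{*\prime}$) delivering the conclusion. Lemma 3.1 (the reformulation of the level condition via $x \land x'^{*}$) could be invoked instead, but the direct substitution into $(L_1)$ appears to be the most economical route.
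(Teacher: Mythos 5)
Your proof is correct and takes essentially the same route as the paper's: both instantiate the level-1 condition at $x'$, use the De Morgan involution $x'' = x$ to reduce $(x')'^{*}$ to $x^{*}$, and conclude via the pseudocomplement law $x{^*}' \land x{^*}'^{*} = 0$. The only difference is that you invoke the defining identity (L$_1$) directly in its inequality form $x \land x'^{*} \leq (x'^{*})'^{*}$, whereas the paper uses the equivalent fixed-point form $x \land x'^{*} = (x \land x'^{*})'^{*}$ and must then expand $(x' \land x''^{*})'$ by the De Morgan law --- a minor streamlining on your part, not a genuinely different argument.
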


\begin{proof}  
$$
\begin{array}{lcll}
x{^*}' \land x' \land x^* 
                                         &=& x{^*}' \land (x' \land x''^*)'^*  &\text{by Lev 1 and (DM)} \\ 
                                         & =& x{^*}' \land (x'' \lor x''{^*}')^* \\
                                         & =& x{^*}' \land (x \lor x{^*}')^* &\text{by (DM)} \\  
                                        & \leq& x{^*}' \land  x{^*}'^* \\
                                         & = &0. &
\end{array}
$$
\end{proof}

We are ready to present our main theorem of this section.  

\begin{Theorem} \label{DMSt}
Let $\mathbf{L} \in \mathbf{DM_1}$.  Then
$\mathbf{L}$ satisfies Stone identity.
\end{Theorem}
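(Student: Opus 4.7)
The strategy is to apply Lemma \ref{110} twice---once with $y := x^*$ and once with $y := x^{**}$---in order to pinch the element $x{^*}' \land x{^{**}}'$ between $x^*$ and $x^{**}$, forcing it to be $0$. The Stone identity will then follow at once from the full De Morgan law available in $\mathbf{DM}$.

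First I would substitute $y = x^*$ into Lemma \ref{110}, using the standard pseudocomplement identity $x^{***} = x^*$ to simplify. This gives $x{^{**}}' \land x{^*}' \land x^{**} = 0$, so by pseudocomplementation $x{^*}' \land x{^{**}}' \leq (x^{**})^* = x^*$. Next I would substitute $y = x^{**}$ into Lemma \ref{110} (again applying $x^{***} = x^*$), obtaining $x{^*}' \land x{^{**}}' \land x^* = 0$, hence $x{^*}' \land x{^{**}}' \leq (x^*)^* = x^{**}$.

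Combining the two bounds yields $x{^*}' \land x{^{**}}' \leq x^* \land x^{**} = 0$. Applying the De Morgan negation---which in $\mathbf{DM}$ is an involution satisfying $(a \land b)' = a' \lor b'$---now converts this to $x^* \lor x^{**} = 0' = 1$, which is the Stone identity.

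The main obstacle, and really the only genuine insight, is recognizing this symmetric pair of substitutions. A single application of Lemma \ref{110} gives only one of the two required bounds on $x{^*}' \land x{^{**}}'$; the complementary bound comes from substituting the pseudocomplement, which swaps the roles of $x^*$ and $x^{**}$ modulo $x^{***} = x^*$. After this is spotted, every remaining step is a one-line calculation involving only pseudocomplementation and the De Morgan laws.
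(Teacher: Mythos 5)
Your proof is correct, and while it follows the same overall plan as the paper --- both arguments reduce the Stone identity to showing $x{^*}' \land x{^{**}}' = 0$ via Lemma \ref{110} and then finish identically by applying $'$ together with $(x \land y)' \approx x' \lor y'$ and (DM) --- your middle step is organized differently and is in fact more elementary. The paper proves $x{^*}' \land x{^{**}}' = 0$ by an (SH2) computation: it rewrites $x^* = x^{**} \to 0$ relativized to $x{^*}' \land x{^{**}}'$, uses the instance of Lemma \ref{110} at $x^*$ to annihilate the antecedent $x^{**} \land x{^*}' \land x{^{**}}'$, concluding $x{^*}' \land x{^{**}}' \land x^* = x{^*}' \land x{^{**}}'$, and then invokes Lemma \ref{110} once more to see that the left side is $0$. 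You bypass (SH2) entirely: from the instances of Lemma \ref{110} at $x^*$ and at $x^{**}$ (the latter simplified by $x^{***} = x^*$) you read off the two bounds $x{^*}' \land x{^{**}}' \leq x^*$ and $x{^*}' \land x{^{**}}' \leq x^{**}$ directly from the maximality property of the pseudocomplement, and meet them to get $0$. What your route buys is a shorter, purely order-theoretic argument using only the pseudocomplemented-lattice structure (which the paper's preliminaries guarantee, since $a^* = a \to 0$ is stated to be the pseudocomplement); the paper's version stays inside equational reasoning with $\to$, consistent with the style of the surrounding proofs. Two cosmetic corrections: Lemma \ref{110} has a single variable, so your substitutions should be written $x := x^*$ and $x := x^{**}$ (there is no $y$ in the lemma), and the simplification $x^{***} = x^*$ is actually needed only in the second instance, not the first, where $(x^*)^* = x^{**}$ suffices.
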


\begin{proof} 
We have $x{^*}' \land x{^{**}}' \land x^* 
= x{^*}' \land x{^{**}}' \land (x^{**} \to 0)
= x{^*}' \land x{^{**}}' \land [(x^{**} \land x{^*}' \land  x{^{**}}')  \to 0]$,
which, by Lemma \ref{110}, implies  $x{^*}' \land x{^{**}}' \land x^* = x{^*}' \land x{^{**}}' $,         
from which, in view of Lemma \ref{110}, we get $x{^*}' \land x{^{**}}' =0.$  Hence, $x{^*}'' \lor x{^{**}}'' =1.$  which, by (DM),  implies   $x^* \lor x^{**} =1$.
\end{proof}

\begin{Corollary}
$\mathbf{DM_1}= \mathbf{DMSt_1}$.
In particular, 
$\mathbf{DMH_1}= \mathbf{DMStH_1}$.
\end{Corollary}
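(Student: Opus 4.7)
The plan is to derive the corollary as a direct consequence of Theorem \ref{DMSt}, essentially with no extra work. First I would observe that the inclusion $\mathbf{DMSt_1} \subseteq \mathbf{DM_1}$ is immediate from the definitions, since $\mathbf{DMSt_1}$ is by notation the subvariety of $\mathbf{DM_1}$ whose members additionally satisfy the Stone identity (St). So only the reverse inclusion requires argument.

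For the reverse inclusion $\mathbf{DM_1} \subseteq \mathbf{DMSt_1}$, take any $\mathbf{L} \in \mathbf{DM_1}$. Theorem \ref{DMSt} asserts exactly that $\mathbf{L}$ satisfies $x^* \lor x^{**} \approx 1$, i.e., (St). Since $\mathbf{L}$ already has a De Morgan negation and is at level $1$, we conclude $\mathbf{L} \in \mathbf{DMSt_1}$, proving equality of the two varieties.

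For the ``in particular'' part, I would intersect both sides of the equation with the variety $\mathbf{H}$ of Heyting algebras (equivalently, adjoin the identity (H) to the axioms). By definition $\mathbf{DMH_1} = \mathbf{DM_1} \cap \mathbf{H}$ and $\mathbf{DMStH_1} = \mathbf{DMSt_1} \cap \mathbf{H}$; since $\mathbf{DM_1} = \mathbf{DMSt_1}$, these intersections agree, giving $\mathbf{DMH_1} = \mathbf{DMStH_1}$.

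There is no real obstacle here: the entire content of the corollary is packaged inside Theorem \ref{DMSt}, and the corollary merely rephrases that content at the level of varieties and then specializes to the Heyting case. The proof should be essentially a two-line citation of the theorem.
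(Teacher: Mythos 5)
Your proposal is correct and follows exactly the route the paper intends: the paper states this corollary without proof, treating it as immediate from Theorem \ref{DMSt} in just the way you describe (trivial inclusion one way, the theorem for the other, and intersection with the Heyting identity for the ``in particular'' part).
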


Recall that $\mathbf{RDM_1}$ denotes the variety of regular De Morgan semi-Heyting algebras of level $1$.  The following result, proved in \cite[Theorem 3.8]{Sa15}, is now a special case of the preceding corollary.
\begin{Corollary}{\rm \cite[Theorem 3.8]{Sa15}}
$\mathbf{RDM_1}= \mathbf{RDMSt_1}$.
\end{Corollary}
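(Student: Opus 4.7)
The plan is to deduce this corollary immediately from the preceding one, which states $\mathbf{DM_1} = \mathbf{DMSt_1}$. Since $\mathbf{RDM_1}$ is by definition the subvariety of $\mathbf{DM_1}$ cut out by the regularity identity (R), and $\mathbf{RDMSt_1}$ is the subvariety of $\mathbf{DMSt_1}$ cut out by the same identity (R), intersecting both sides of the equation $\mathbf{DM_1} = \mathbf{DMSt_1}$ with the equational class defined by (R) yields $\mathbf{RDM_1} = \mathbf{RDMSt_1}$ at once.

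Equivalently, I would argue by two inclusions. The inclusion $\mathbf{RDMSt_1} \subseteq \mathbf{RDM_1}$ is trivial since the class on the right is defined by strictly fewer identities (we simply drop (St)). Conversely, any $\mathbf{L} \in \mathbf{RDM_1}$ lies in $\mathbf{DM_1}$, so Theorem \ref{DMSt} guarantees that $\mathbf{L}$ satisfies the Stone identity; combined with regularity, this places $\mathbf{L}$ in $\mathbf{RDMSt_1}$.

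There is no genuine obstacle here: all the substantive content is already encapsulated in Theorem \ref{DMSt}, where the Stone identity was established throughout $\mathbf{DM_1}$ (not merely on its regular part). The present corollary merely records the fact, explicitly noted in the paragraph preceding its statement, that Theorem \ref{DMSt} strictly generalizes the earlier result \cite[Theorem 3.8]{Sa15}, so that \cite[Theorem 3.8]{Sa15} now falls out as a one-line specialization to the regular subvariety.
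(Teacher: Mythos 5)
Your proof is correct and matches the paper's own treatment: the paper simply observes that this result, originally \cite[Theorem 3.8]{Sa15}, is now a special case of the preceding corollary $\mathbf{DM_1}=\mathbf{DMSt_1}$, which is precisely your intersection-with-(R) argument. Nothing further is needed.
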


It is natural to wonder if Theorem \ref{DMSt} can be further generalized.  There are two possible directions to try to generalize, which led us to ask if the theorem holds in $\mathbf{DM_2}$ or in $\mathbf{Dms_1}$.  However, the theorem fails in both cases, as shown by the following two examples:

\begin{Example} \label{Ex}
 Theorem \ref{DMSt} fails in the variety $\mathbf{DM_2}$ as witnessed by 
 the following algebra:

 Let $\mathbf{A} = \langle A, \lor, \land, \to, ', 0,1 \rangle$be an algebra, with $A= \{0, a, b, c, d, e, 1 \} $, whose lattice reduct and the operatons $\to$ and $'$ are defined in Figure 1.  It is routine to verify that that $\mathbf{A} \in \mathbf{DM_2}$, 
 but fails to satisfy the Stone identity \rm(at $b$\rm).
\end{Example} 
\vspace{.5cm}
\setlength{\unitlength}{.7cm} 
\begin{picture} (8,1)  

\put(5,1){\circle*{.15}}  
\put(4.5,1){$1$}

\put(4,0){\circle*{.15}}  
\put(3.5,0){$e$}  

\put(5.2,-1.1){$a$}  

\put(6,0){\circle*{.15}} 

\put(6.2,0){$c$}

\put(5,-1){\circle*{.15}} 

\put(4,-2){\circle*{.15}}  
\put(3.4, -2.1){$d$}  

\put(6,-2){\circle*{.15}}  
\put(6.2, -2.1){$b$}  

\put(5,-3){\circle*{.15}} 
\put(5.2,-3.3){$0$}

\put(4,-2){\line(1,1){1}}
\put(5,-3){\line(1,1){1}}

\put(4,0){\line(1,-1){1}}
\put(5,-1){\line(1,-1){1}}
\put(4,-2){\line(1,-1){1}}
                               \put(5,-1){\line(1,1){1}}

\put(4,0){\line(1,1){1}}
\put(5,1){\line(1,-1){1}}

\put(5,-10.0){Figure 1}      
\end{picture}

\vspace{2.5cm}

\begin{tabular}{r|rrrrrrr}
$\to$: & $0$ & $1$ & $d$ & $e$ & $b$ & $c$ & $a$\\
\hline
    $0$ & $1$ & $1$ & $1$ & $1$ & $1$ & $1$ & $1$ \\
    $1$ & $0$ & $1$ & $d$ & $e$ & $b$ & $c$ & $a$ \\
    $d$ & $b$ & $1$ & $1$ & $1$ & $b$ & $1$ & $1$ \\
    $e$ & $0$ & $1$ & $d$ & $1$ & $b$ & $c$ & $c$ \\
    $b$ & $d$ & $1$ & $d$ & $1$ & $1$ & $1$ & $1$ \\
    $c$ & $0$ & $1$ & $d$ & $e$ & $b$ & $1$ & $e$ \\
    $a$ & $0$ & $1$ & $d$ & $1$ & $b$ & $1$ & $1$
\end{tabular} \hspace{.5cm}
 \begin{tabular}{r|rrrrrrr}
$'$: & $0$ & $1$ & $d$ & $e$ & $b$ & $c$ & $a$\\
\hline
   & $1$ & $0$ & $e$ & $d$ & $c$ & $b$ & $a$
\end{tabular} \hspace{.5cm}\\

\vspace{1.5cm}
Thus, $\mathbf{DMSt_2} \subset \mathbf{DM_2}$.    
Also, since the algebra in Figure 1 is actually in $\mathbf{RDM_2}$, we also have $\mathbf{RDMSt_2}  \subset  \mathbf{RDM_2}$.

\begin{Example}  The following algebra is in $\mathbf{RDms_1}$,  but fails to satisfy the Stone identity.\\
\vspace{3cm}
\setlength{\unitlength}{.7cm} 
\begin{picture} (8,1)  
\put(5,0){\circle*{.15}}  
\put(5,.1){$1$}


\put(5.2,-1.1){$4$}  



\put(5,-1){\circle*{.15}} 

\put(4,-2){\circle*{.15}}  
\put(3.4, -2.1){$2$}  

\put(6,-2){\circle*{.15}}  
\put(6.2, -2.1){$3$}  

\put(5,-3){\circle*{.15}} 
\put(5.2,-3.3){$0$}

\put(4,-2){\line(1,1){1}}
\put(5,-3){\line(1,1){1}}

        \put(5,-1){\line(0,1){1}}

\put(5,-1){\line(1,-1){1}}
\put(4,-2){\line(1,-1){1}}


\put(5,-10.5){{\rm Figure} $2$}      
\end{picture}

\begin{tabular}{r|rrrrr}
$'$: & $0$ & $1$ & $2$ & $3$ & $4$\\
\hline
   & $1$ & $0$ & $1$ & $1$ & $1$
\end{tabular} \hspace{.5cm}\\
\ \\ \ \\
\begin{tabular}{r|rrrrr}
$\to$: & $0$ & $1$ & $2$ & $3$ & $4$\\
\hline
    $0$ & $1$ & $0$ & $3$ & $2$ & $0$ \\
    $1$ & $0$ & $1$ & $2$ & $3$ & $4$ \\
    $2$ & $3$ & $2$ & $1$ & $0$ & $2$ \\
   $3$ & $2$ & $3$ & $0$ & $1$ & $3$ \\
    $4$ & $0$ & $1$ & $2$ & $3$ & $1$
\end{tabular} \hspace{.5cm}
\end{Example}
\vspace{1cm}
Thus, $\mathbf{DmsSt_1} \subset \mathbf{Dms_1}$.  In fact, $\mathbf{RDmsSt_1} \subset \mathbf{RDms_1}$.

\vspace{1cm}
\section{The level of the variety $\mathbf{DmsSt}$ of dually ms, Stone semi-Heyting algebras}

Recall that the variety of dually ms Stone semi-Heyting algebras, $\mathbf{DmsSt}$, is defined, relative to $\mathbf{DQD}$, by the following identities:
\begin{itemize}
\item[(St)]  \quad $x^* \lor x^{**} \approx 1$,
\item[{\rm (JDM)}] \quad $(x \lor y)' \approx x' \land y'$ \qquad ($\lor$-De Morgan law).  
\end{itemize}

Since $\mathbf{DQD}$-algebras satisfy $x'' \leq x$, it is clear that the $\{\lor, \land, ' \}$-reduct of a $\mathbf{DQD}$-algebra satisfying (JDM) is indeed a dual ms algebra.

In this section our goal is to show that the variety $\mathbf{DmsSt}$ lies at level 2, but not at level 1.  
The following theorem, proved in \cite[Theorem 2.5]{Sa14}, 
gives an alternative definition of $\mathbf {DQDSt_n}$. 

\begin{Theorem}{\rm \cite[Theorem 2.5]{Sa14}} \label{level_second form}
For $n \in \omega$, $\mathbf {DQDSt_{n+1}}$ is defined by the identity:
 $(x \land x'{^*})^{n('{^*})} \approx (x \land x'{^*})^{(n+1)('{^*})}$, relative to $\mathbf {DQDSt}$.

In particular, the variety  $\mathbf {DQDSt_1}$ and $\mathbf {DQDSt_2}$ are defined, respectively, by the identities:
$x \land x'{^*} \approx (x \land x'^*)'{^*}$ and $(x \land x'{^*})'^* \approx (x \land x'^*)'{^*}'^*$, relative to $\mathbf {DQDSt}$. 
\end{Theorem}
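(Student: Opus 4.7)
The plan is to reduce both the level identity $t_{n+1}(x) \approx t_{n+2}(x)$ and the proposed identity to a single telescoped form, in which both become the same equation. Define $a_0 := x$ and $a_{k+1} := (a_k)'^*$ (so $a_k$ is what the paper writes as $x^{k('^*)}$), and similarly $b_0 := x \land x'^*$, $b_{k+1} := (b_k)'^*$. Then $b_0 = a_0 \land a_1$, and Lemma~\ref{2.2}(iii) says that $(\cdot)'^*$ distributes over meets; a routine induction gives
\[
b_k = a_k \land a_{k+1} \qquad (k \geq 0).
\]
Under this identification the proposed identity $b_n \approx b_{n+1}$ reads $a_n \land a_{n+1} \approx a_{n+1} \land a_{n+2}$.

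The crux is the key lemma: $a_{k+2} \leq a_k$ for all $k \geq 0$. By substituting $x \mapsto a_k$ it suffices to prove the base case $a_2 \leq a_0$, that is, $((x'^*)')^* \leq x$. From $x' \land x'^* = 0$ (pseudocomplement) and axiom~(b) we obtain $x'' \lor (x'^*)' = (x' \land x'^*)' = 0' = 1$. Writing $c := (x'^*)'$, so that $a_2 = c^*$, meet both sides of $x'' \lor c = 1$ with $c^*$ and invoke distributivity together with $c \land c^* = 0$:
\[
c^* \;=\; c^* \land (x'' \lor c) \;=\; (c^* \land x'') \lor 0 \;=\; c^* \land x'',
\]
so $c^* \leq x'' \leq x$ by axiom~(d). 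This proves the key lemma.

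Given the key lemma, the subsequences $a_0 \geq a_2 \geq a_4 \geq \cdots$ and $a_1 \geq a_3 \geq a_5 \geq \cdots$ are separately decreasing, so the meet $t_{n+1}(x) = \bigwedge_{k=0}^{n+1} a_k$ telescopes to $a_n \land a_{n+1}$, and similarly $t_{n+2}(x) = a_{n+1} \land a_{n+2}$. Thus the level identity $t_{n+1}(x) \approx t_{n+2}(x)$ coincides literally with $a_n \land a_{n+1} \approx a_{n+1} \land a_{n+2}$, which is the proposed identity by the first paragraph. The single nontrivial step is the key lemma; once the cover $x'' \lor (x'^*)' = 1$ is extracted from axiom~(b), the rest is a mechanical use of distributivity and axiom~(d). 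We note in passing that the Stone identity is never invoked, so the equivalence in fact holds already in $\mathbf{DQD}$; the qualifier ``relative to $\mathbf{DQDSt}$'' merely reflects the context in which the result is applied.
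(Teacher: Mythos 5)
Your proposal is correct, and the comparison here is slightly unusual: the paper gives no proof of this statement at all, importing it verbatim from \cite[Theorem 2.5]{Sa14}, so the closest in-paper benchmark is Lemma 3.1 (the De Morgan analogue), which the author declares ``immediate since $x^{2('^{*})} \leq x$ in a $\mathbf{DMSH}$-algebra.'' Your route isolates exactly that inequality as the crux, but derives it without (DM) and without (St): from $x' \land x'^{*} = 0$ you get $x'' \lor (x'^{*})' = 1$ by axioms (a) and (b), and then $(x'^{*})'^{*} = (x'^{*})'^{*} \land x'' \leq x$ by distributivity, pseudocomplementation, and axiom (d) --- every step valid in an arbitrary $\mathbf{DQD}$-algebra. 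The supporting machinery also checks out: the identification $b_k = a_k \land a_{k+1}$ follows by induction from Lemma \ref{2.2}(iii), which the paper states for all of $\mathbf{DQD}$, and the parity-chain telescoping $t_{n+1}(x) = a_n \land a_{n+1}$ is sound once $a_{k+2} \leq a_k$ is available by substitution. So your closing observation is justified and is the genuine added value of your argument: the level identity $t_{n+1}(x) \approx t_{n+2}(x)$ and the identity $(x \land x'^{*})^{n('^{*})} \approx (x \land x'^{*})^{(n+1)('^{*})}$ become literally the same equation already relative to $\mathbf{DQD}$, which both recovers the cited theorem (by intersecting with $\mathbf{St}$) and subsumes the paper's Lemma 3.1 as the special case where (d) is an equality. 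The Stone identity is genuinely needed elsewhere in Section 4 (Lemmas \ref{Level_Lemma3}--\ref{Level_Lemma12} and Theorem \ref{Level_Theorem} use it repeatedly), but, as you note, not for this equivalence; this is consistent with the examples in the paper, e.g., in the $15$-element algebra one checks $2'^{*}{}'^{*} = 12 \leq 2$, as your key lemma predicts, even though that algebra lies outside $\mathbf{DmsSt}$.
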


{\bf Throughout this section,  $\mathbf{L}$ denotes an algebra in $\mathbf{DmsSt}$, and $x \in \mathbf{L}$. }   

Let $x^+ := x'{^*}'$.
\begin{Lemma} \label{Level_Lemma3} 
$x'  \lor  x{^*}'{^{**}} =1$.
\end{Lemma}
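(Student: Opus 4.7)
The plan is to reduce the identity $x' \lor x^{*'**} = 1$ to Lemma 2.2(v), which states $x \lor x^+ = 1$ for all $x$ in a $\mathbf{DQD}$-algebra, where $x^+ := x'{^*}'$. This is strictly more than what we need, so the work is essentially unpacking notation and using that both $'$ and $*$ are order-reversing.

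First I would instantiate Lemma 2.2(v) at $x'$ in place of $x$. Since $(x')^+ = (x')'{^*}' = (x'')^{*'}$, this gives
\[
x' \lor (x'')^{*'} = 1.
\]
Next I would enlarge the second disjunct. Axiom (d) of $\mathbf{DQD}$ gives $x'' \leq x$; since pseudocomplementation is order-reversing in any pseudocomplemented lattice, $x^* \leq (x'')^*$, and applying $'$ (order-reversing by Lemma 2.2(ii)) yields $(x'')^{*'} \leq x^{*'}$. Substituting into the previous display gives the intermediate identity $x' \lor x^{*'} = 1$.

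To finish, I would enlarge once more using the basic pseudocomplement inequality $a \leq a^{**}$ at $a = x^{*'}$, giving $x^{*'} \leq x^{*'**}$ and therefore $x' \lor x^{*'**} \geq x' \lor x^{*'} = 1$, as required.

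The only real obstacle is keeping the layered superscripts straight and verifying that Lemma 2.2(v) is indeed available throughout $\mathbf{DQD}$ (and hence throughout $\mathbf{DmsSt}$) with $^+$ defined as above; assuming so, neither the Stone identity nor the $\lor$-De Morgan law (JDM) of the ambient variety is needed for this particular lemma, so those hypotheses are presumably reserved for the subsequent lemmas building toward the level-$2$ conclusion for $\mathbf{DmsSt}$.
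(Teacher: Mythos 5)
Your proof is correct, but it takes a genuinely different route from the paper's. The paper stays inside $\mathbf{DmsSt}$ and argues via distributivity and the $\land$-De Morgan axiom (b): it first shows $x' \lor x{^*}'{^*} = x'$, i.e.\ the inequality $x{^*}'{^*} \leq x'$, and then concludes by applying the Stone identity at $x{^*}'$, writing $x' \lor x{^*}'{^{**}} = x' \lor x{^*}'{^*} \lor x{^*}'{^{**}} = 1$; so the paper's proof does use (St), though not (JDM). You instead reduce everything to Lemma \ref{2.2}(v) instantiated at $x'$ (correctly computing $(x')^+ = x''{^*}'$), use $x'' \leq x$ together with the antitonicity of $^*$ and of $'$ to obtain the stronger identity $x' \lor x{^*}' = 1$, and then enlarge via $a \leq a^{**}$; every step checks out, and what this buys is genuine extra generality: your argument is valid in every $\mathbf{DQD}$-algebra, confirming your closing remark that neither (St) nor (JDM) is needed for this lemma. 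What the paper's proof buys in exchange is the displayed inequality $x{^*}'{^*} \leq x'$ (its equation \eqref{eqnA}), which is not merely scaffolding: it is cited again in the proof of Lemma \ref{Level_Lemma7}. If your proof replaced the paper's, that inequality would have to be recovered separately --- which your stronger identity in fact yields at once, since $x{^*}'{^*} = x{^*}'{^*} \land (x' \lor x{^*}') = x{^*}'{^*} \land x'$ by distributivity --- but that step is absent from your proposal and should be recorded.
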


\begin{proof}
Since $x'  \lor x{^*}'{^*}= (x'  \lor x{^*}'{^*}) \land (x \land x^*)' = (x'  \lor x{^*}'{^*}) \land (x' \lor x{^*}' )= x'  \lor (x{^*}'{^*} \land x{^*}' )  =x' $, we get
\begin{equation} \label{eqnA}
 x{^*}'{^*} \leq x'.
\end{equation}
Hence, $x'  \lor  x{^*}'{^{**}}= x'  \lor x{^*}'{^{*}} \lor  x{^*}'{^{**}}=1$, in view of \eqref{eqnA} and (St). 
\end{proof}

\begin{Lemma} \label{Level_Lemma5} 
$x{^*}' \leq x{^{**}}'^{*}$.
\end{Lemma}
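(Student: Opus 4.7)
The statement to prove is $x^{*\prime} \leq x^{**\prime *}$ in an arbitrary $\mathbf{L} \in \mathbf{DmsSt}$. My plan is to exploit the interaction between the Stone identity (St), the $\lor$-De Morgan law (JDM), and the fact that semi-Heyting algebras are pseudocomplemented with $a^* = a \to 0$.

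First I would start from (St), $x^* \lor x^{**} \approx 1$, and apply the operation $'$ to both sides. Since $1' = 0$ (clause (a) of the definition of $\mathbf{DQD}$), this gives $(x^* \lor x^{**})' = 0$. Next I would invoke (JDM), the $\lor$-De Morgan law, which is available because $\mathbf{L} \in \mathbf{Dms}$: this converts the left-hand side into $x^{*\prime} \land x^{**\prime}$. The upshot of these two steps is the identity
\begin{equation*}
x^{*\prime} \land x^{**\prime} = 0.
\end{equation*}

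Finally, I would use pseudocomplementation. Since semi-Heyting algebras are pseudocomplemented (as noted right after the definition, with $a^* = a \to 0$), the element $a^*$ is characterized as the largest element whose meet with $a$ is $0$. Applying this to $a = x^{**\prime}$, the displayed equation says exactly that $x^{*\prime}$ is one such element, hence $x^{*\prime} \leq (x^{**\prime})^* = x^{**\prime *}$, which is the desired inequality.

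I do not anticipate any serious obstacle: the argument is essentially a one-line chain once (St) is hit with $'$ and (JDM) is applied. The only thing to be careful about is the bookkeeping of primes and stars, and making explicit use of the pseudocomplement characterization rather than any stronger Heyting-style property, since $\mathbf{L}$ is only assumed to be a semi-Heyting algebra.
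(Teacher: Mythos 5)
Your proof is correct, and it rests on exactly the same key fact as the paper's: that (St) together with (JDM) forces $x^{*}{'} \land x^{**}{'} = (x^* \lor x^{**})' = 1' = 0$. Where you differ is in how that fact is converted into the inequality. You appeal to the order-theoretic characterization of the pseudocomplement — $a \land b = 0$ implies $b \leq a^*$ — which is legitimate here, since the paper states in the preliminaries that semi-Heyting algebras are pseudocomplemented with $a^* = a \to 0$ and announces that such properties will be used without mention. The paper instead runs a purely equational computation: it expands $x^{*}{'} \land x^{**}{'}{^{*}} = x^{*}{'} \land (x^{**}{'} \to 0)$, applies the semi-Heyting axiom (SH2) to relativize the implication to $x^{*}{'}$, obtaining $x^{*}{'} \land [(x^{*}{'} \land x^{**}{'}) \to 0]$, and then collapses the antecedent to $0$ via (JDM) and (St), yielding $x^{*}{'} \land (0 \to 0) = x^{*}{'}$. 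In effect the (SH2) step re-derives, inside the equational calculus, precisely the instance of the pseudocomplement's maximality property that you invoke abstractly. Your version is shorter and more conceptual; the paper's has the virtue of being a self-contained identity chain in the axioms (the style used throughout the section, and the form amenable to automated checking), but mathematically the two arguments are equivalent and both are complete.
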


\begin{proof}  
$x{^*}' \land x{^{**}}'^{*}
= x{^*}' \land (x{^{**}}' \to 0)
= x{^*}' \land [(x{^*}'  \land x{^{**}}') \to (x{^*}' \land 0)]
= x{^*}' \land [(x{^*}'  \land x{^{**}}') \to 0]
= x{^*}' \land [(x{^*}  \lor x^{**})' \to 0] 
= x{^*}' \land (0 \to 0)   
= x{^*}' $, in view of  the identities (JDM) and (St). 
\end{proof}

\begin{Lemma} \label{Level_Lemma7} 
$x{^{**}}' = x{^{*}}'^{*}$.  Hence, $x{^*}{^+} \leq x^{**}$. 
\end{Lemma}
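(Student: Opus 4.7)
The plan is to establish $x{^{**}}' = x{^{*}}'^{*}$ by showing that $x{^{*}}'$ and $x{^{**}}'$ form a complementary pair in the underlying bounded distributive lattice $L$. Since in any distributive lattice a complement (when it exists) coincides with the pseudocomplement, this will identify $x{^{*}}'^{*}$ with $x{^{**}}'$ immediately, without any further manipulation of iterated primes and stars.

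First I would verify the meet relation $x{^{*}}' \land x{^{**}}' = 0$. This falls out of (St) and (JDM) in a single line: by (St), $x^* \lor x^{**} = 1$, and (JDM) then gives $x{^{*}}' \land x{^{**}}' = (x^* \lor x^{**})' = 1' = 0$. This already yields one inequality, namely $x{^{**}}' \leq x{^{*}}'^{*}$, since $x{^{*}}'^{*}$ is by definition the pseudocomplement of $x{^{*}}'$. (Alternatively, one can read off the same inequality from Lemma~\ref{Level_Lemma5}.)

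Next I would verify the join relation $x{^{*}}' \lor x{^{**}}' = 1$. For this I would use axiom (b) of $\mathbf{DQD}$, i.e.\ $(u \land v)' = u' \lor v'$, together with the basic pseudocomplement identity $x^* \land x^{**} = 0$; these give $x{^{*}}' \lor x{^{**}}' = 0' = 1$ at once. Distributivity of $L$ (a standing property of semi-Heyting algebras) then delivers the reverse inequality:
\[
x{^{*}}'^{*} = x{^{*}}'^{*} \land (x{^{*}}' \lor x{^{**}}') = (x{^{*}}'^{*} \land x{^{*}}') \lor (x{^{*}}'^{*} \land x{^{**}}') \leq x{^{**}}'.
\]

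Combining the two inequalities yields $x{^{**}}' = x{^{*}}'^{*}$. The ``hence'' clause is then purely formal: unfolding $x^+ := x'{^*}'$ with $x$ replaced by $x^*$ gives $x{^*}{^+} = (x{^{*}}'^{*})' = (x{^{**}}')' = x{^{**}}''$, and axiom (d) of $\mathbf{DQD}$, $y'' \leq y$, finishes it off. I do not anticipate any genuine obstacle; the only point that requires attention is that axiom (b) of $\mathbf{DQD}$ supplies one De Morgan law automatically, while the (JDM) hypothesis of $\mathbf{Dms}$ supplies the dual one, and it is precisely the conjunction of these two full De Morgan laws with (St) that forces $x{^{*}}'$ and $x{^{**}}'$ to be complementary.
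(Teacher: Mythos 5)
Your proof is correct, but it follows a genuinely different route from the paper's. The paper obtains the identity by splicing together its two preceding lemmas: the inequality \eqref{eqnA} of Lemma \ref{Level_Lemma3}, instantiated at $x^*$, gives $x{^{**}}'^{*} \leq x{^*}'$, which with Lemma \ref{Level_Lemma5} yields the intermediate equality $x{^*}' = x{^{**}}'^{*}$; a second substitution of $x^*$ for $x$ plus the pseudocomplement law $x^{***} = x^*$ then delivers $x{^{**}}' = x{^*}'^{*}$. You instead argue directly that $x{^*}'$ and $x{^{**}}'$ are complementary: the meet is $0$ by (JDM) applied to the Stone identity, the join is $1$ by axiom (b) applied to $x^* \land x^{**} = 0$, and distributivity identifies the complement with the pseudocomplement $x{^*}'^{*}$ --- all of which you verify inline, so nothing is left to an unstated fact. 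Your meet relation is equivalent to Lemma \ref{Level_Lemma5}, but you bypass Lemma \ref{Level_Lemma3} and the double-substitution trick entirely, which makes your argument self-contained and arguably more transparent: it isolates the structural reason the identity holds, namely that axiom (b) and (JDM) supply the two dual De Morgan laws which, combined with (St), force $x{^*}'$ to be complemented (a slightly stronger fact than the stated identity). The paper's route is more economical in context, since it recycles lemmas already proved for other purposes. Your handling of the ``hence'' clause --- unfolding $x{^*}{^+} = (x{^*}'^{*})' = x{^{**}}''$ and invoking axiom (d) --- coincides with the paper's.
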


\begin{proof} From the equation \eqref{eqnA} of           
Lemma \ref{Level_Lemma3} and Lemma \ref{Level_Lemma5}, we have $x{^*}' = x{^{**}}'^{*}$, from which, replacing $x$ by $x^*$, we get
$x{^{**}}' =  x{^{***}}'^*$, 
leading to $x{^{**}}' = x{^{*}}'^{*}$. Hence, $x{^*}{^+}=  x{^{**}}''  \leq x^{**}$.
\end{proof}

\begin{Lemma} \label{Level_Lemma9} 
$x{^*}'' = x^{*}$.
\end{Lemma}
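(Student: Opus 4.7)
The plan is to prove the two inequalities $x^{*''} \leq x^*$ and $x^* \leq x^{*''}$ separately. The first is immediate from axiom (d) with $y := x^*$, so all the substance lies in establishing the reverse inequality.

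My first move would be to observe that $x^{*'}$ is \emph{complemented} (not merely pseudocomplemented) in the underlying distributive lattice, with complement $x^{**'}$. Indeed, by (JDM) together with the Stone identity,
\[
x^{*'} \land x^{**'} \;=\; (x^* \lor x^{**})' \;=\; 1' \;=\; 0,
\]
and by axiom (b) together with the pseudocomplement law $x^* \land x^{**} = 0$,
\[
x^{*'} \lor x^{**'} \;=\; (x^* \land x^{**})' \;=\; 0' \;=\; 1.
\]
Since complements in a distributive lattice are unique and coincide with pseudocomplements, this yields $(x^{*'})^{**} = x^{*'}$.

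Next, I would apply Lemma~\ref{Level_Lemma7} with $x$ replaced by $x^{*'}$. The left-hand side $(x^{*'})^{**'}$ simplifies, using $(x^{*'})^{**} = x^{*'}$, to $x^{*''}$; the right-hand side $(x^{*'})^{*'*}$ simplifies, using Lemma~\ref{Level_Lemma7} again to rewrite $(x^{*'})^{*} = x^{**'}$, to $x^{**''*}$. Hence $x^{*''} = x^{**''*}$. Finally, axiom (d) applied to $x^{**}$ gives $x^{**''} \leq x^{**}$; taking pseudocomplements (order-reversing) yields $x^{**''*} \geq x^{***} = x^*$, so $x^{*''} \geq x^*$, as required.

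The main obstacle is spotting that $x^{*'}$ is complemented in the lattice; once $(x^{*'})^{**} = x^{*'}$ is in hand, the correct substitution into Lemma~\ref{Level_Lemma7} delivers the key identity $x^{*''} = x^{**''*}$ in a doubly useful way, after which axiom (d) on $x^{**}$ closes the argument cleanly. Parsing the iterated $'$ and $*$ operations without error is the only real bookkeeping hazard.
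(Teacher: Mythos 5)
Your proof is correct, but it follows a genuinely different route from the paper's. The paper argues directly in one distributive-lattice computation: since $x^* \land x^{**''}=0$ (from axiom (d)), distributivity gives $x^{*''} = (x^{*''}\lor x^*)\land (x^{*''}\lor x^{**''})$; axiom (c) rewrites $x^{*''}\lor x^{**''}$ as $(x^*\lor x^{**})''$, which is $1$ by (St), so $x^*\leq x^{*''}$, and (d) supplies the reverse inequality. Notably, the paper's argument never invokes (JDM) -- it uses only (c), (d), (St) and distributivity, so it is valid in all of $\mathbf{DQDSt}$, not just $\mathbf{Dms St}$. Your argument instead leans on (JDM) twice: once directly, to show that $x^{*'}$ is complemented (a nice structural observation -- the De Morgan laws transport the complemented pair $x^*, x^{**}$ given by (St) to the complemented pair $x^{*'}, x^{**'}$, whence $(x^{*'})^{**}=x^{*'}$), and once through Lemma \ref{Level_Lemma7}, whose proof in the paper depends on Lemmas \ref{Level_Lemma3} and \ref{Level_Lemma5} and hence on (JDM). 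The substitution of $x^{*'}$ into Lemma \ref{Level_Lemma7}, yielding $x^{*''}=x^{**''*}$, and the closing step via (d) and the order-reversal and idempotence properties $x^{***}=x^*$ of pseudocomplementation, all check out; there is no circularity since Lemma \ref{Level_Lemma7} precedes Lemma \ref{Level_Lemma9}. In short: your proof buys a reuse of earlier machinery and an illuminating complementation fact, at the cost of extra hypotheses ((JDM)) and length; the paper's proof is shorter, self-contained, and holds in greater generality.
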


\begin{proof} 
From $x^* \land x{^{**}}'' =0$, we have $(x{^*}'' \lor x^*) \land (x{^{*}}'' \lor x{^{**}}'') = x{^*}''$, which implies $(x{^*}'' \lor x^*) \land (x^{*} \lor x{^{**}})'' = x{^*}''$, whence, by (St), we get $x{^*}'' \lor x^* = x{^*}''$.  Thus we can conclude that $x{^*}'' = x^{*}$.
\end{proof}

\begin{Lemma} \label{Level_Lemma10} 
$x{^*}''  \leq x'{^*}' $.
\end{Lemma}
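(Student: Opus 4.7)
The plan is to reduce the claim to proving the cleaner inequality $x^* \leq x'{^*}'$, using Lemma \ref{Level_Lemma9} which gives $x{^*}'' = x^*$. Granted this reduction, the lemma will follow once the auxiliary identity
\[
x \lor x'{^*}' = 1
\]
is established, because one can then distribute $x^*$ against it and use $x \land x^* = 0$ to pinch $x^*$:
\[
x^* = x^* \land (x \lor x'{^*}') = (x^* \land x) \lor (x^* \land x'{^*}') = x^* \land x'{^*}'.
\]

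To obtain the auxiliary identity, I would start from the basic pseudocomplement law $x' \land x'{^*} = 0$, apply $'$, and use axioms (a) and (b) (the $\land$-De Morgan law plus $0' = 1$) to compute
\[
x'' \lor x'{^*}' = (x' \land x'{^*})' = 0' = 1.
\]
Since $x'' \leq x$ by axiom (d), this upgrades to $x \lor x'{^*}' = 1$, as needed. Combining with the distributivity step above gives $x^* \leq x'{^*}'$, and then Lemma \ref{Level_Lemma9} yields $x{^*}'' = x^* \leq x'{^*}'$.

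The main obstacle is really just spotting the auxiliary identity $x \lor x'{^*}' = 1$; once it is in hand, the rest is a routine distributivity calculation. It is worth remarking that this particular step relies only on the $\mathbf{DQD}$-axioms together with Lemma \ref{Level_Lemma9}; neither (St) nor (JDM) is invoked directly here, although both are of course built into the preceding lemma. This also suggests the result is robust in the sense that it does not depend on Stone in a subtle way beyond what has already been extracted into the earlier lemmas.
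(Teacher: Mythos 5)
Your proof is correct, and its engine is the same as the paper's: a distributivity pinch of a pseudocomplement against the complementation $x \lor x'{^*}' = 1$. Two remarks on the differences. First, the auxiliary identity you work to establish is literally Lemma \ref{2.2}(v) of the paper ($x \lor x^+ = 1$, where $x^+ := x'{^*}'$), which the paper simply cites; your derivation of it from $x' \land x'^* = 0$ via axioms (a), (b) and (d) is a correct reconstruction, but it was already available. Second, the paper does not route through Lemma \ref{Level_Lemma9}: it starts from $x \land x{^*}'' = 0$ (which holds because $x{^*}'' \leq x^*$ by axiom (d)) and pinches $x{^*}''$ directly, so the paper's proof of this particular lemma uses neither (St) nor (JDM) and is valid in all of $\mathbf{DQD}$. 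Your detour through Lemma \ref{Level_Lemma9} imports (St) unnecessarily: for the stated inequality you only need $x{^*}'' \leq x^*$, not equality, so replacing that citation by axiom (d) makes your argument Stone-free and in fact establishes the stronger inequality $x^* \leq x^+$ in every $\mathbf{DQD}$-algebra --- which is exactly what the paper later extracts from Lemmas \ref{Level_Lemma9} and \ref{Level_Lemma10} at the start of the proof of Lemma \ref{Level_Lemma12}. Accordingly, your closing robustness remark can be sharpened: the result does not depend on Stone at all, not even through the preceding lemmas.
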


\begin{proof}
From $x  \land  x{^*}'' =0$ we have $(x \lor  x'{^*}' ) \land (x{^*}''  \lor x'{^*}'{\color{red})}  = x'{^*}'$, implying that $x{^*}''  \leq x'{^*}' $, in view of Lemma \ref{2.2} (v). 
\end{proof}

\begin{Lemma} \label{Level_Lemma12} 
$x{^*}^+  = x^{**} $.
\end{Lemma}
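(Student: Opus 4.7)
The plan is to prove the equality $x^{*+} = x^{**}$ by combining Lemmas \ref{Level_Lemma7} and \ref{Level_Lemma9}. Since Lemma \ref{Level_Lemma7} already gives the inequality $x^{*+} \leq x^{**}$, what remains conceptually is the reverse inclusion, and in fact a direct symbolic chain produces the equality without separately splitting directions.

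First, I would unpack the definition. By definition $x^+ := x'^{*'}$, so substituting $x^*$ for $x$ yields
\[
x^{*+} = (x^*)'^{*'} = x^{*'*'}.
\]
Next, I would apply Lemma \ref{Level_Lemma7}, which states $x^{**'} = x^{*'*}$, and take the dual negation $'$ of both sides to obtain
\[
x^{**''} = x^{*'*'}.
\]
Finally, Lemma \ref{Level_Lemma9}, which says $x^{*''} = x^{*}$, applied with $x^*$ in the role of $x$, gives $x^{**''} = x^{**}$. Chaining these three identities, I get $x^{*+} = x^{*'*'} = x^{**''} = x^{**}$, which is the desired conclusion.

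There is really no obstacle here; the proof is a short symbolic manipulation once the previous lemmas are in hand. The only small care point is bookkeeping the superscripts ($'$ vs. $*$) correctly when substituting $x^*$ into an identity proved for an arbitrary $x$, and remembering that taking $'$ of both sides of an equation is legitimate. The lemma is essentially a corollary stating that the inequality of Lemma \ref{Level_Lemma7} is tight, and it will presumably be used in the next section to establish the level-$2$ identity for $\mathbf{DmsSt}$.
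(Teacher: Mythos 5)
Your proof is correct, but it takes a genuinely different route from the paper's. The paper argues by antisymmetry: it first combines Lemma \ref{Level_Lemma9} ($x{^*}''=x^*$) with Lemma \ref{Level_Lemma10} ($x{^*}'' \leq x'{^*}'$) to get $x^* \leq x^+$, substitutes $x^*$ for $x$ to obtain $x^{**} \leq x{^*}{^+}$, and then invokes the inequality half of Lemma \ref{Level_Lemma7} for $x{^*}{^+} \leq x^{**}$. You instead exploit the full equational content of Lemma \ref{Level_Lemma7}: applying the unary operation $'$ to both sides of $x{^{**}}' = x{^*}'{^*}$ gives $x{^{**}}'' = x{^*}'{^*}' = x{^*}{^+}$, and Lemma \ref{Level_Lemma9} instantiated at $x^*$ collapses $x{^{**}}''$ to $x^{**}$. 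Every step checks out: the substitution instances are bookkept correctly, and applying $'$ to an identity is of course legitimate. Your chain is slightly more economical, since it bypasses Lemma \ref{Level_Lemma10} entirely --- in the paper that lemma is used only to feed this one, so under your derivation it becomes superfluous for the main Theorem \ref{Level_Theorem} --- whereas the paper's route records the intermediate inequality $x^* \leq x^+$ along the way. Both arguments sit at the same level of difficulty and generality; the difference is purely one of decomposition (a direct equational chain versus two opposing inequalities).
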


\begin{proof}
Applying Lemma \ref{Level_Lemma9} and Lemma \ref{Level_Lemma10}, we get $x^*  \leq x^+ $.
Then, from  
Lemma \ref{Level_Lemma7}, we have $x^{**} \leq x{^*}{^+} \leq x^{**}$.  
\end{proof}

We are now ready to prove our main theorem of this section. 

\begin{Theorem}\label{Level_Theorem}
The variety $\mathbf{DmsSt}$ is at level 2. 
\end{Theorem}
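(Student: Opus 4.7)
The plan is to invoke Theorem \ref{level_second form} to reduce the statement to verifying the single identity
\[
(x \land x'^*)'^* \approx \bigl((x \land x'^*)'^*\bigr)'^*
\]
in $\mathbf{DmsSt}$, and then to derive this by a short algebraic manipulation, chaining together the lemmas proved earlier in this section with the axioms (b) of $\mathbf{DQD}$ and the usual laws for the pseudocomplement $*$ (which every semi-Heyting algebra satisfies).

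The first step is to set $a := x \land x'^*$ and, using axiom (b) $\big((u \land v)' \approx u' \lor v'\big)$ together with the pseudocomplement law $(u \lor v)^* \approx u^* \land v^*$ (valid in the distributive pseudocomplemented reduct), to compute
\[
a'^* = (x' \lor x'^*{}')^* = x'^* \land x'^*{}'^*.
\]
Next, applying axiom (b) once more and then the pseudocomplement identity again, I would compute
\[
(a'^*)'^* = \bigl(x'^*{}' \lor x'^*{}'^*{}'\bigr)^* = x'^*{}'^* \land x'^*{}'^*{}'^*.
\]
The key simplification comes from Lemma \ref{Level_Lemma12}, which gives $x^*{}'^*{}' \approx x^{**}$; substituting $x'$ for $x$ yields $x'^*{}'^*{}' \approx x'^{**}$. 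Combined with the standard identity $y^{***} \approx y^*$ applied to $y = x'$, this reduces $(a'^*)'^*$ to $x'^*{}'^* \land x'^*$, which is the same as $a'^*$. That gives the desired identity, so by Theorem \ref{level_second form} (with $n = 1$) the variety $\mathbf{DmsSt}$ lies at level $2$.

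The main obstacle, such as it is, is really just spotting the correct substitution into Lemma \ref{Level_Lemma12}; the rest of the chain is straightforward rewriting using (b) and the pseudocomplement calculus. No reliance on (JDM) or (St) is needed at this final step beyond what has already been absorbed into the preceding lemmas (Lemmas \ref{Level_Lemma3}--\ref{Level_Lemma12}), which did use those axioms essentially. To show the result is sharp (i.e., $\mathbf{DmsSt}$ is not at level $1$), one would separately exhibit a small algebra in $\mathbf{DmsSt}$ where $x \land x'^* \ne (x \land x'^*)'^*$; that sharpness part is not required for the theorem statement itself and could be postponed to an example.
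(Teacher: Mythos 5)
Your proposal is correct and follows essentially the same route as the paper: both reduce the claim to the level-2 identity of Theorem \ref{level_second form}, expand $(x \land x'^*)'^*{}'^*$ using axiom (b) and the pseudocomplement law $(u \lor v)^* \approx u^* \land v^*$, and then close the computation with Lemma \ref{Level_Lemma12} applied to $x'$ together with $y^{***} \approx y^*$. The only differences are cosmetic (you keep everything as meets of starred terms where the paper works with joins under a single star), and your side remark that (JDM) is not needed in the final chain beyond what the lemmas absorbed is accurate.
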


\begin{proof} 
Let $\mathbf L \in \mathbf{DmsSt}$ and
let $a \in L$.  
Then,
$(a \land a'^*)'{^*}'{^*}      
=(a'^* \land a'{^*}'{^*})'^*
= (a'{^*}' \lor a'{^*}{^+})^*   
= (a^+ \lor a'^{**})^*$  by              
Lemma \ref{Level_Lemma12}.   
Hence, $(a \land a'^*)'{^*}'{^*} =a^{+*} \land a'^{***}
=a^{+*} \land a'^{*}
=(a^{+} \lor a')^{*}
=(a'^* \land a)'^*$, in view of (JDM).
\end{proof}

Recall that $\mathbf{DmsStH}$ denotes the variety of dually ms, Stone Heyting algebras.

\begin{Corollary}
$\mathbf{DmsSt} = \mathbf{DmsSt_2}$.   In particular, \\
$\mathbf{DmsStH}= \mathbf{DmsStH_2}$.
\end{Corollary}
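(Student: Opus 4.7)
The claim is an immediate consequence of Theorem \ref{Level_Theorem}. My plan is simply to unwind the notational convention $\mathbf{V_n} := \mathbf{V} \cap \mathbf{DQD_n}$ set in Definition \ref{5.5}, and observe that all of the substantive work is already contained in the theorem just proved.

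One inclusion, $\mathbf{DmsSt_2} \subseteq \mathbf{DmsSt}$, is automatic from the definition and requires no argument. For the reverse inclusion, I would take an arbitrary $\mathbf{L} \in \mathbf{DmsSt}$ and invoke Theorem \ref{Level_Theorem}, which asserts precisely that $\mathbf{L}$ satisfies the identity (L$_2$): $t_2(x) \approx t_3(x)$. By Definition \ref{5.5}, this is exactly the criterion for membership in $\mathbf{DQD_2}$. Therefore $\mathbf{L} \in \mathbf{DmsSt} \cap \mathbf{DQD_2} = \mathbf{DmsSt_2}$, which yields the desired equality of varieties.

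For the ``in particular'' clause, I would apply the same argument relativised to the subclass of Heyting algebras. Any $\mathbf{L} \in \mathbf{DmsStH}$ is in $\mathbf{DmsSt}$, hence by the above lies in $\mathbf{DQD_2}$; combining this with the fact that $\mathbf{L}$ satisfies the Heyting identity (H) gives $\mathbf{L} \in \mathbf{DmsStH} \cap \mathbf{DQD_2} = \mathbf{DmsStH_2}$. The other inclusion is again by definition.

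There is no real obstacle to overcome: the hard analytic content sits in Lemmas \ref{Level_Lemma3}--\ref{Level_Lemma12} and culminates in Theorem \ref{Level_Theorem}, so the corollary is literally a one-line appeal to that theorem together with the notational convention for $\mathbf{V_n}$. I would present it as such.
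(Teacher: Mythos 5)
Your proposal is correct and matches the paper exactly: the paper states this corollary without proof because it is precisely the immediate unwinding of Theorem \ref{Level_Theorem} together with the convention $\mathbf{V_n} := \mathbf{V} \cap \mathbf{DQD_n}$ from Definition \ref{5.5}, just as you argue. Your treatment of the Heyting case by relativising the same argument is likewise what the paper intends.
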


The following corollary is now immediate.

\begin{Corollary}
The variety $\mathbf{DMSt}$ of De Morgan Stone semi-Heyting algebras is at level 2.
\end{Corollary}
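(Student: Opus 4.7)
The plan is to derive this corollary directly from Theorem \ref{Level_Theorem} by establishing the varietal inclusion $\mathbf{DMSt} \subseteq \mathbf{DmsSt}$. Once this containment is in place, Theorem \ref{Level_Theorem} immediately gives $\mathbf{DMSt} \subseteq \mathbf{DmsSt} = \mathbf{DmsSt_2} \subseteq \mathbf{DQD_2}$, which is precisely the assertion that $\mathbf{DMSt}$ sits at level $2$, i.e., $\mathbf{DMSt} = \mathbf{DMSt_2}$.

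The only thing to verify is that every De Morgan semi-Heyting algebra satisfies the $\lor$-De Morgan law (JDM), i.e., $\mathbf{DM} \subseteq \mathbf{Dms}$. This is a short manipulation of the defining identities. Starting from axiom (b), $(x \land y)' \approx x' \lor y'$, apply $'$ to both sides and use (DM) on the left to obtain $x \land y \approx (x' \lor y')'$. Substituting $x'$ for $x$ and $y'$ for $y$, and then using (DM) once more to replace $x''$ by $x$ and $y''$ by $y$, yields $x' \land y' \approx (x \lor y)'$, which is exactly (JDM). Intersecting with the Stone variety then gives $\mathbf{DMSt} \subseteq \mathbf{DmsSt}$.

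No serious obstacle is anticipated: the entire content of the corollary is already packed into Theorem \ref{Level_Theorem}, and the present statement merely specializes it by observing that involutivity of $'$ forces (JDM), so any De Morgan Stone semi-Heyting algebra is automatically a dually ms, Stone semi-Heyting algebra. If one also wants to record that this level is sharp, it suffices to exhibit a De Morgan Stone semi-Heyting algebra that fails the level-$1$ identity $x \land x'^* \approx (x \land x'^*)'^*$ from Theorem \ref{level_second form}; this can be arranged by taking a small Boolean-based example (e.g., a suitable expansion of a four-element De Morgan lattice with a non-Heyting semi-Heyting arrow), but the corollary as stated needs only the containment argument above.
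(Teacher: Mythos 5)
Your proposal is correct and matches the paper's proof, which likewise just observes $\mathbf{DMSt} \subset \mathbf{DmsSt}$ and applies Theorem \ref{Level_Theorem}. Your derivation of (JDM) from axiom (b) and the involution (DM) simply makes explicit the containment that the paper leaves as an observation.
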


\begin{proof}
Observe that $\mathbf{DMSt} \subset \mathbf{DmsSt}$ and apply Theorem \ref{Level_Theorem}.
\end{proof}

We note that Theorem \ref{Level_Theorem} is sharp in the sense that it fails to satisfy the level 1 identity, as shown by the following example.

\begin{Example} It is easy to see that the following algebra is in $\mathbf{DmsSt}$; 
but fails to satisfy the level 1 identity: $x \land x'^*= (x \land x'^*)'^*$ at $a$.\\
\end{Example}

\setlength{\unitlength}{.7cm} 
\begin{picture} (8,1)  

\put(4,0){\circle*{.15}}  
\put(3.5,0){$1$}  

\put(5.2,-1.1){$a$}  

\put(3,-1){\circle*{.15}} 
\put(2.5,-1){$c$}
\put(5,-1){\circle*{.15}} 

\put(4,-2){\circle*{.15}}  
\put(3.4, -2.1){$d$}  

\put(6,-2){\circle*{.15}}  
\put(6.2, -2.1){$b$}  

\put(5,-3){\circle*{.15}} 
\put(5.2,-3.3){$0$}

\put(4,-2){\line(1,1){1}}
\put(5,-3){\line(1,1){1}}

\put(4,0){\line(1,-1){1}}
\put(5,-1){\line(1,-1){1}}
\put(4,-2){\line(1,-1){1}}

\put(3,-1){\line(1,1){1}}
\put(3,-1){\line(1,-1){1}}

\put(6.5,-8.5){Figure 3}      
\end{picture}

\vspace{2.8cm}

\begin{tabular}{r|rrrrrr}
$\to$: & $0$ & $1$ & $a$ & $b$ & $c$ & $d$\\
\hline
    $0$ & $1$ & $0$ & $0$ & $c$ & $b$ & $b$ \\
    $1$ & $0$ & $1$ &$a$ & $b$ & $c$ & $d$ \\
    $a$ & $0$ & $1$ & $1$ & $b$ & $c$ & $c$ \\
   $b$ & $c$ & $b$ & $b$ & $1$ & $0$ & $0$ \\
   $c$ & $b$ & $c$ & $d$ & $0$ & $1$ & $a$ \\
   $d$ & $b$ & $c$ & $c$ & $0$ & $1$ & $1$
\end{tabular} \hspace{.7cm}
\begin{tabular}{r|rrrrrr}
': & $0$ & $1$ & $a$ & $b$ & $c$ & $d$\\
\hline
   & $1$ &$0$ & $b$ & $b$ & $c$ & $1$
\end{tabular} \hspace{.5cm} \\

\vspace{.8cm}

Let $\mathbf{DmsL}$ denote the subvariety of $\mathbf{Dms}$ defined by the following Lee's identity (\cite{Le70}):\\

(L) \quad  $(x \land y)^* \lor  (x^*  \land y)^* \lor (x \land y^*)^* \approx 1$.

\begin{Example} The following $15$-element algebra whose universe is:\\
 $ \{0,1,2,3,4,5,6,7,8,9,10,11,12,,13,14 \}$, whose lattice reduct and the operations $\to$, $'$ are given below, is in $\mathbf{DmsL} \setminus \mathbf{DmsSt}$ 
 and is at level $\geq 3$, as the level 2 identity, $(x \land x'^*)'^*  \approx (x \land x'^*)'{^*}'^*$, fails at $2$.  Thus, the variety 
$\mathbf{DmsL}$ is at a level $\geq 3$.  \rm(We suspect, however, that its level is $3$.\rm)\\
\end{Example}

\begin{minipage}{0.5 \textwidth}
	\setlength{\unitlength}{1mm}        
	\begin{picture}(50,70)(-40,0)      
	\put(0.2,-1){\circle{2}}    
	
	\put(-11,9){\circle{2}}  
	\put(.1,10){\circle{2}}  
	\put(12,10){\circle{2}}   
	
	\put(22,20){\circle{2}}  
	\put(10,20){\circle{2}}  
		
	\put(0.0,20){\circle{2}} 
	\put(10,30){\circle{2}} 
	
	\put(20,30){\circle{2}} 
	
	\put(-12,20){\circle{2}} 
	
	\put(-2.6,30){\circle{2}}   
	\put(8.5,40){\circle{2}}  

	\put(8.5,53){\circle{2}}  
	\put(-3,43){\circle{2}}   
	\put(-12,33){\circle{2}}  
	
	\put(4,-0.8){\makebox(0,0){$0$}}
	\put(-15.7,10){\makebox(0,0){$3$}}
	\put(-5.4,10){\makebox(0,0){$13$}}  
	\put(15.5,9.8){\makebox(0,0){$10$}}
	\put(6,30){\makebox(0,0){$9$}}      
	\put(-5.0,20){\makebox(0,0){$11$}}    
	\put(5.8,20){\makebox(0,0){$6$}} 
	\put(-16.5,20){\makebox(0,0){$14$}} 
	\put(-7,30){\makebox(0,0){$7$}} 
	\put(-7,43){\makebox(0,0){$2$}} 
	 \put(-17,35){\makebox(0,0){$12$}}  
	  \put(11,55){\makebox(0,0){$1$}}   
	  
	  \put(25,20){\makebox(0,0){$8$}}
	   \put(23,30){\makebox(0,0){$4$}}
	    \put(13,41){\makebox(0,0){$5$}}
	\put(1,1){\line(1,1){8.8}}  
	\put(-11.8,9.6){\line(1,1){10.3}}    
	\put(-0.8,-2){\line(-1,1){10.5}}   
	\put(9.5,11){\line(-1,1){10}}  
	
	\put(.4,21){\line(1,1){8.5}}. 
	\put(11,11){\line(1,1){8.2}}  
	\put(19,21){\line(-1,1){8.2}}. 
	
         \put(-4.0, 10.5){\line(-1,1){10}}  
	\put(7.1,20.2){\line(-1,1){10.0}}   
	\put(17.2,31){\line(-1,1){9.8}}    
	
	\put(9.2,21){\line(1,1){9.8}}  
	\put(-3,10){\line(1,1){10}}. 
	\put(-14,21.5){\line(1,1){8.2}}  
        \put(-4,31){\line(1,1){8.8}}  
        
          \put(-3.0,-1){\line(0,1){12}}  
          \put(-15,10){\line(0,1){8.2}}. 
          \put(-5.5,20){\line(0,1){8.6}}  
           \put(6,30){\line(0,1){8.4}}  
            \put(17,20){\line(0,1){8.5}}  
            \put(6.8,10){\line(0,1){8.2}}   
            
	\put(5,40){\line(0,1){12}}   
	\put(-7,31){\line(0,1){12}}  
	\put(-16, 21){\line(0,1){12}}. 
	
	 \put(-7,43){\line(1,1){10}}  
	\put(-17,33){\line(1,1){10}}  
	
	\end{picture}
\end{minipage}\\

\begin{tabular}{r|rrrrrrrrrrrrrrr}
$\to$: & $0$ & $1$ & $2$ & $3$ & $4$ & $5$ & $6$ & $7$ & $8$ & $9$ & $10$ & $11$ & $12$ & $13$ & $14$\\
\hline
    $0$ & $1$ & $1$ & $1$ & $1$ & $1$ & $1$ & $1$ & $1$ & $1$ & $1$ & $1$ & $1$ & $1$ & $1$ & $1$ \\
    $1$ & $0$ & $1$ & $2$ & $3$ & $4$ & $5$ & $6$ & $7$ & $8$ & $9$ & $10$ & $11$ & $12$ & $13$ & $14$ \\
    $2$ & $0$ & $1$ & $1$ & $3$ & $4$ & $5$ & $4$ & $5$ & $8$ & $9$ & $8$ & $9$ & $12$ & $13$ & $14$ \\
    $3$ & $4$ & $1$ & $1$ & $1$ & $4$ & $1$ & $4$ & $1$ & $4$ & $1$ & $4$ & $1$ & $1$ & $4$ & $1$ \\
    $4$ & $3$ & $1$ & $2$ & $3$ & $1$ & $1$ & $2$ & $2$ & $9$ & $9$ & $11$ & $11$ & $12$ & $12$ & $12$ \\
    $5$ & $0$ & $1$ & $2$ & $3$ & $4$ & $1$ & $6$ & $2$ & $8$ & $9$ & $10$ & $11$ & $12$ & $13$ & $12$ \\
   $6$ & $3$ & $1$ & $1$ & $3$ & $1$ & $1$ & $1$ & $1$ & $9$ & $9$& $9$ & $9$ & $12$ & $12$ & $12$ \\
    $7$ & $0$ & $1$ & $1$ & $3$ & $4$ & $1$ & $4$ & $1$ & $8$ & $9$ & $8$ & $9$ & $12$ & $13$ & $12$ \\
    $8$ & $12$ & $1$ & $2$ & $12$ & $1$ & $1$ & $2$ & $2$ & $1$ & $1$ & $2$ & $2$ & $12$ & $12$ & $12$ \\
    $9$ & $13$ & $1$ & $2$ & $12$ & $4$ & $1$ & $6$ & $2$ & $4$ & $1$ & $6$ & $2$ & $12$ & $13$ & $12$ \\
    $10$ & $12$ & $1$ & $1$ & $12$ & $1$ & $1$ & $1$ & $1$ & $1$ & $1$ & $1$ & $1$ & $12$ & $12$ & $12$ \\
    $11$ & $13$ & $1$ & $1$ & $12$ & $4$ & $1$ & $4$ & $1$ & $4$ & $1$ & $4$ & $1$ & $12$ & $13$ & $12$ \\
    $12$ & $8$ & $1$ & $1$ & $9$ & $4$ & $5$ & $4$ & $5$ & $8$ & $9$ & $8$ & $9$ & $1$ & $4$ & $5$ \\
    $13$ & $9$ & $1$ & $1$ & $9$ & $1$ & $1$ & $1$ & $1$ & $9$ & $9$ & $9$ & $9$ & $1$ & $1$ & $1$ \\
    $14$ & $8$ & $1$ & $1$ & $9$ & $4$ & $1$ & $4$ & $1$ & $8$ & $9$ & $8$ & $9$ & $1$ & $4$ & $1$
\end{tabular} \\
\ \\ \ \\
\begin{tabular}{r|rrrrrrrrrrrrrrr}
$'$: & $0$ & $1$ & $2$ & $3$ & $4$ & $5$ & $6$ & $7$ & $8$ & $9$ & $10$ & $11$ & $12$ & $13$ & $14$\\
\hline
   & $1$& $0$ & $3$ & $2$ & $8$ & $10$ & $9$ & $11$ & $8$ & $10$ & $9$ & $11$ & $12$ & $1$ & $2$
\end{tabular} \\

\medskip

Here is yet another direction to consider for a possible generalization of Theorem \ref{Level_Theorem}.
Recall that $\mathbf{BDQDSt}$ denotes the subvariety of $\mathbf{DQD}$ defined by \\

\begin{itemize}
\item[(B)] $(x \lor x^*)' \approx x' \land  x{^*}$  
\item[(St)] $x^*  \lor x{^{**}} \approx 1$.\\
\end{itemize}

In this context, we have the following theorem, with M. Kinyon, which will be published in a future paper.\\

\begin{Theorem}
  The variety $\mathbf{BDQDSt}$ is at level $2$. \\ 
\end{Theorem}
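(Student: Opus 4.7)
The plan is to rerun the proof of Theorem~\ref{Level_Theorem} inside $\mathbf{BDQDSt}$, checking that the full $\lor$-De Morgan law (JDM) is needed at exactly one point and that at that point (BDM) already suffices. Inspecting Lemmas~\ref{Level_Lemma3}, \ref{Level_Lemma5}, \ref{Level_Lemma7}, \ref{Level_Lemma9}, \ref{Level_Lemma10} and~\ref{Level_Lemma12}, one finds that (JDM) is invoked only in the proof of Lemma~\ref{Level_Lemma5}, where $x^{*'} \land x^{**'}$ is rewritten as $(x^* \lor x^{**})'$ so that (St) can collapse it to $0$. This rewriting is just (BDM) applied with $x^*$ in place of $x$: since $(x^*)^* = x^{**}$, (BDM) gives $(x^* \lor x^{**})' = x^{*'} \land x^{**'}$. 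Every other ingredient of those six lemmas uses only the $\mathbf{DQD}$-axioms, distributivity, pseudocomplementation (in particular $x \land x^* = 0$ and $(x \lor y)^* = x^* \land y^*$), and (St), all of which are available in $\mathbf{BDQDSt}$. Hence Lemmas~\ref{Level_Lemma3}--\ref{Level_Lemma12} extend verbatim; in particular the crucial identity $x^{*+} = x^{**}$ of Lemma~\ref{Level_Lemma12} persists.

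With these tools I would then reproduce the final calculation of Theorem~\ref{Level_Theorem} word-for-word: starting from $(a \land a'^*)^{'*'*}$ and applying Lemma~\ref{2.2}(iii), axiom (b), Lemma~\ref{Level_Lemma12}, the pseudocomplement distributivity $(x \lor y)^* = x^* \land y^*$ and Lemma~\ref{2.2}(iv), one arrives at $(a \land a'^*)^{'*'*} = a^{+*} \land a'^* = (a \land a'^*)^{'*}$. By Theorem~\ref{level_second form} this is exactly the identity that defines $\mathbf{DQDSt_2}$ relative to $\mathbf{DQDSt}$, so $\mathbf{BDQDSt}$ is at level~$2$.

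The real obstacle is the bookkeeping in the first paragraph: one must verify carefully that every invocation of the $\lor$-De Morgan law in the $\mathbf{DmsSt}$-argument is either an instance of pseudocomplement distributivity (which holds for free in any pseudocomplemented distributive lattice) or else lands on operands of the very special shape $(x^*, x^{**})$, for which (BDM) is tailor-made. Once this observation is pinned down, the rest is mechanical transcription, and Theorem~\ref{Level_Theorem} is recovered as a strict strengthening rather than a new computation.
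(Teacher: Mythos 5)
The paper offers no proof to compare against: the theorem is announced as joint work with M.~Kinyon and explicitly deferred to a future paper. So your proposal can only be audited on its own merits, and it passes. Your central bookkeeping claim is correct: tracing through Lemmas \ref{Level_Lemma3}, \ref{Level_Lemma5}, \ref{Level_Lemma7}, \ref{Level_Lemma9}, \ref{Level_Lemma10} and \ref{Level_Lemma12}, the only genuine use of the full $\lor$-De Morgan law is the step $(x^*)' \land (x^{**})' = (x^* \lor x^{**})'$ in Lemma \ref{Level_Lemma5}, and this is exactly (B) instantiated at $x^*$, since $(x^*)^* = x^{**}$. Everything else does run on weaker resources: Lemma \ref{Level_Lemma3} uses axiom (b) (meet De Morgan), distributivity, $x \land x^* \approx 0$ and (St); Lemma \ref{Level_Lemma9} uses axiom (c), $(x \lor y)'' \approx x'' \lor y''$, which is a $\mathbf{DQD}$ axiom and not an instance of (JDM); Lemma \ref{Level_Lemma10} uses Lemma \ref{2.2}(v); and Lemmas \ref{Level_Lemma7} and \ref{Level_Lemma12} only combine earlier lemmas with $x^{***} \approx x^*$. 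One point you should make explicit rather than leave implicit: the concluding step of the paper's proof of Theorem \ref{Level_Theorem} cites (JDM), which on its face threatens your claim that Lemma \ref{Level_Lemma5} is the sole (JDM) site; in fact that citation is dispensable, since $(a'^{*} \land a)' = (a'^{*})' \lor a' = a^+ \lor a'$ needs only axiom (b), so the final chain $(a \land a'^{*})'^{*}{}'^{*} = a^{+*} \land a'^{*} = (a^+ \lor a')^* = (a \land a'^{*})'^{*}$ is available in $\mathbf{BDQDSt}$. Since $\mathbf{BDQDSt} \subseteq \mathbf{DQDSt}$, Theorem \ref{level_second form} applies and yields membership in level $2$; sharpness (failure of the level-$1$ identity) is inherited from the six-element example following Theorem \ref{Level_Theorem}, as that algebra lies in $\mathbf{DmsSt} \subseteq \mathbf{BDQDSt}$. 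A cosmetic caveat: identity (B) as printed in Section 4 reads $(x \lor x^*)' \approx x' \land x^*$, which is a typo for $(x \lor x^*)' \approx x' \land (x^*)'$ (compare (BDM) in Section 2); your argument correctly uses the latter form.
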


\vspace{1cm}

\small

\ \\
\ \ \ \ \ \ \ \ \\
Department of Mathematics\\
State University of New York\\
New Paltz, NY 12561\\
\
\\
sankapph@newpaltz.edu\\
\ \
\end{document}